\newtheorem{theorem}{Theorem}[section]
\newtheorem{lemma}[theorem]{Lemma}
\newtheorem{definition}[theorem]{Definition}
\newtheorem{corollary}[theorem]{Corollary}
\newtheorem{remark}[theorem]{Remark}
\newcommand{\lcm}{\mathrm{lcm}}
\newcommand{\F}{\mathbb F}
\newcommand{\Z}{\mathbb Z}
\newcommand{\doublespace}
\begin{document}

\begin{frontmatter}

\title{Arithmetic constraints of polynomial maps through discrete logarithms}

\author{Lucas Reis}

\ead{lucasreismat@gmail.com}
\address{Departamento de Matem\'{a}tica,
Universidade Federal de Minas Gerais,
UFMG,
Belo Horizonte MG (Brazil),
 30123-970}
\begin{abstract}
Let $q$ be a prime power, let $\F_q$ be the finite field with $q$ elements and let $\theta$ be a generator of the cyclic group $\F_q^*$. For each $a\in \F_q^*$, let $\log_{\theta} a$ be the unique integer $i\in \{1, \ldots, q-1\}$ such that $a=\theta^i$. Given polynomials  $P_1, \ldots, P_k\in \F_q[x]$ and divisors $1<d_1, \ldots, d_k$ of $q-1$, we discuss the distribution of the functions
$$F_{i}:y\mapsto \log_{\theta}P_i(y)\pmod {d_i}, $$
over the set $\F_q\setminus \cup_{i=1}^k\{y\in \F_q\,|\, P_i(y)=0\}$. Our main result entails that, under a natural multiplicative condition on the pairs $(d_i, P_i)$, the functions $F_i$ are asymptotically independent. We also provide some applications that, in particular, relates to past work.
\end{abstract}

\begin{keyword}
finite fields; discrete logarithm; polynomial maps; cyclotomic cosets
\MSC[2010]{11T24 \sep 12E20}
\end{keyword}
\journal{Elsevier}
\end{frontmatter}




\section{Introduction}
Let $q$ be a prime power and $\F_q$ be the finite field with $q$ elements. A polynomial $P\in \F_q[x]$ naturally induces a function from $\F_q$ to itself, namely the evaluation map $y\mapsto P(y)$. Conversely any map from $\F_q$ to itself is uniquely induced by a polynomial in $\F_q[x]$, up to reduction $\pmod{x^q-x}$. Given two ``interesting'' sets $A, B\subseteq \F_q$, it is natural to ask how frequent the image by $P$ of elements in $A$ falls into  $B$. In other words, what is the cardinality of the set $A\cap P^{-1}(B)=\{y\in A\,|\, P(y)\in B\}$. If $A=\F_q$ and $B$ is the set of elements in $\F_q$ satisfying some property $\mathcal P$, the number $\#(A\cap P^{-1}(B))$ measures how frequent $P$ maps elements of $\F_q$ to elements satisfying $\mathcal P$. For instance, if $A=\F_q$ and $B=(\F_q^*)^2=\{y^2\,|\, y\in \F_q^*\}$, $\#(A\cap P^{-1}(B))$ measures how frequent $P$ maps elements of $\F_q$ to nonzero squares. Many authors have studied this kind of problem in the past few years. For instance, in~\cite{bour}, $P(x)=x$, $A$ is a cyclotomic coset in $\F_p^*$, $p$ is prime and $B$ is a symmetric interval. In~\cite{dig1, dig2, dig3, dig4, dig5}, many questions are considered with $\mathcal P$ being related to the digits of elements in $\F_q$ with respect to a given basis (in the sense of Dartyge and S\'arközy~\cite{dig2}).

In this paper we consider an arithmetic setting that naturally arises from the multiplicative structure of $\F_q$. It is known that the multiplicative group $\F_q^*$ is cyclic. If $\theta$ is a generator of $\F_q^*$, for each $a\in \F_q^*$, let $\log_{\theta} a$ be the unique element $i\in \{1, \ldots, q-1\}$ such that $a=\theta^i$. The integer $i=\log_{\theta}a$ is the discrete logarithm of $a$ by $\theta$. We explore the behavior of the function $\log_{\theta}$ modulo divisors of $q-1$, through nonzero polynomial values. For a given divisor $d>1$ of $q-1$ and a polynomial $P\in \F_q[x]$, consider the set $\F_q\cap P^{-1}(\mathcal C_{i, d})$ with $1\le i\le d-1$, where $\mathcal C_{i, d}:=\{\theta^{dj+i}\,|\, 0\le j< (q-1)/d\}$ is a $d$-th cyclotomic coset. In other words,
$$\F_q\cap P^{-1}(\mathcal C_{i, d})=\{y\in \F_q\,|\, \log_{\theta}f(y)\equiv i\pmod d\}.$$
By viewing $P$ as a random map from $\F_q$ to itself,  we would expect the set $\F_q\cap P^{-1}(\mathcal C_{i, d})$ to be of size around $q/d$. More generally, if we consider divisors $1< d_1, \ldots, d_k$ and nonzero polynomials $P_1, \ldots, P_k\in \F_q[x]$, we would expect that the sets 
$$\F_q\cap\left(\bigcap_{\ell=1}^kP_{\ell}^{-1}(\mathcal C_{i_{\ell}, d_{\ell}})\right),$$
have size around $\frac{q}{d_1\ldots d_k}$. This may fail if the polynomials $P_i$ exhibit a ``multiplicative dependence'' with respect to the numbers $d_i$. For instance, if $k=2$, $d_1=d_2=2$ and $P_1\cdot P_2$ is of the form $h(x)^2$ with $h\in \F_q[x]$, we have that $$\F_q\cap P_{1}^{-1}(\mathcal C_{1, 2})\cap P_{2}^{-1}(\mathcal C_{0, 2})=\emptyset.$$
It is natural to further require that the product $d_1\cdots d_k$ is small when compared to $q$, otherwise we do not have ``enough space'' to observe a random behavior. The main result of this paper, Theorem~\ref{thm:main} compiles these ideas and asymptotically confirms these heuristics. The following corollary is a nice application of our main result.

\begin{corollary}\label{cor:main}
Let $f\in \mathbb Z[x]$ be a polynomial not of the form $ag(x)^2$ or $ax\cdot g(x)^2$ with $g\in \mathbb Z[x]$ and $a\in \mathbb Z$, and let $p$ be a prime number. Let $C_{1}$ (resp. $C_{-1}$) be the set of nonzero squares (resp. non squares) $\pmod p$. If $f_p: y\pmod p\mapsto f(y)\pmod p$ is the evaluation map induced by $f$ on $\F_p$ and $n_{i, j}(p)$ denotes the cardinality of the set $f_p^{-1}(C_i)\cap C_j$, then $\lim\limits_{p\to+\infty}\frac{n_{i, j}(p)}{p}=\frac{1}{4}$ for $i,j\in\{1, -1\}$.
\end{corollary}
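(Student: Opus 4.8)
The plan is to recognise Corollary~\ref{cor:main} as the special case of Theorem~\ref{thm:main} with $k=2$, $d_1=d_2=2$, $P_1(x)=x$ and $P_2(x)=\bar f(x)$, the reduction of $f$ modulo $p$. I would first translate the quadratic residue condition into the discrete logarithm language. Since $\theta$ generates $\F_p^*$, an element $a\in\F_p^*$ lies in $C_1$ (is a nonzero square) exactly when $\log_\theta a\equiv 0\pmod 2$ and lies in $C_{-1}$ exactly when $\log_\theta a\equiv 1\pmod 2$; that is, $C_1=\mathcal C_{0,2}$ and $C_{-1}=\mathcal C_{1,2}$. Hence, up to the $O(\deg f)$ points with $y=0$ or $f(y)=0$, the quantity $n_{i,j}(p)$ equals the cardinality of $\F_p\cap P_1^{-1}(\mathcal C_{j',2})\cap P_2^{-1}(\mathcal C_{i',2})$, where $i',j'\in\{0,1\}$ encode $i,j$ via $1\mapsto 0$ and $-1\mapsto 1$.

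With this set-up the task reduces to checking the multiplicative hypothesis of Theorem~\ref{thm:main}. For $d_1=d_2=2$ this hypothesis fails precisely when one of the products $x$, $\bar f(x)$ or $x\,\bar f(x)$ is a constant multiple of a square in $\F_p[x]$: expanding each coset indicator through the order-$2$ multiplicative character $\chi$ of $\F_p^*$, the contribution beyond the main term $p/4$ is governed by the character sums $\sum_y\chi(y)$, $\sum_y\chi(\bar f(y))$ and $\sum_y\chi(y\,\bar f(y))$, each of which is $O(\sqrt p)$ by Weil's bound exactly when the corresponding polynomial is not a square up to a constant factor. Since $x$ never is, it remains only to ensure that neither $\bar f$ nor $x\,\bar f$ is, and these two requirements correspond exactly to the two excluded shapes $ag(x)^2$ and $ax\,g(x)^2$ for $f$.

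The step requiring genuine care — and the one I expect to be the main obstacle — is transferring this non-squareness from $\mathbb Q$ to $\F_p$ for all but finitely many $p$. I would factor $f=a\prod_j P_j(x)^{e_j}$ into distinct primitive irreducibles $P_j\in\Z[x]$ and then argue that for every prime $p$ outside a fixed finite set (those dividing $a$, the leading coefficients, or the pairwise resultants and the discriminants of the $P_j$) the reductions $\bar P_j$ stay separable and pairwise coprime. The parities of the multiplicities $e_j$ are then preserved under reduction, so $\bar f$ (respectively $x\,\bar f$) is a constant times a square in $\F_p[x]$ if and only if $f$ (respectively $xf$) is one over $\mathbb Q$. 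The hypotheses on $f$ thus guarantee the multiplicative condition for all sufficiently large $p$.

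Applying Theorem~\ref{thm:main} then shows that, for each of the four choices of $(i',j')$, the set $\F_p\cap P_1^{-1}(\mathcal C_{j',2})\cap P_2^{-1}(\mathcal C_{i',2})$ has cardinality $\frac{p}{d_1 d_2}+o(p)=\frac{p}{4}+o(p)$. Absorbing the negligible contribution of the excluded points yields $n_{i,j}(p)=\frac{p}{4}+o(p)$, and therefore $\lim_{p\to+\infty}n_{i,j}(p)/p=\frac14$ for all $i,j\in\{1,-1\}$, as claimed.
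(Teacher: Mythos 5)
Your proposal is correct and follows essentially the same route as the paper: reduce to Theorem~\ref{thm:main} with $k=2$, $\bold{d}=(2,2)$, $\bold{P}=(x,\bar f)$, and observe that multiplicative independence fails only when $\bar f$ or $x\bar f$ is a constant times a square in $\F_p[x]$. Your resultant/discriminant argument for transferring non-squareness from $\Z[x]$ to $\F_p[x]$ for all large $p$ is precisely the content of the paper's Lemma~\ref{lem:essential}.
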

In the flavor of the previous corollary, Theorem~\ref{thm:ter} entails that for a sufficiently generic polynomial $f\in \mathbb Z[x]$, the polynomial $f(x)\pmod p$ maps around $\varphi(p-1)$ incongruent values modulo $p$ to primitive roots modulo $p$ when $p$ is a large prime number. Another interesting application of our main result, Theorem~\ref{thm:sec} concerns the distribution modulo $d$ of $\log_{\theta} v$ as $v$ runs over nonzero elements of an arbitrary affine subspace of $\F_q$. An immediate application of the former recovers results from~\cite{dig2, dig4, dig5}.  

We comment that the proof of our main result is based on the construction of characteristic functions for cyclotomic cosets by means of multiplicative characters of finite fields. This kind of approach has been extensively used to prove several results on existence and distribution over finite fields; for more details, see~\cite{char} and the references therein.

Here is a summary of the paper. In Section 2 we state our main result, providing remarks and some of its immediate consequences. Section 3 provides background data and in Section 4 we prove our main result. In Section 5 we provide applications of our main result.

\section{Main result}
In this section we state our main result and provide a straightforward application. First, we introduce some notation and useful definitions. Throughout this paper, $q$ denotes a prime power, $\F_q$ is the finite field with $q$ elements and $\theta$ is a generator of $\F_q^*$. For integers $a<b$, we set $[a, b]=\{a, a+1, \ldots, b\}$.
\begin{definition}
Fix $\bold{d}=(d_1, \ldots, d_k)$, where each $d_i>1$ is a divisor of $q-1$. 
\begin{enumerate}[(i)]
\item $\Lambda(\bold{d})$ stands for the set $\prod_{i=1}^k[0, d_i-1]$; 
\item for each $\bold{a}\in \Lambda(\bold{d})$ with $\bold{a}=(a_1, \ldots, a_k)$, we set $\bold{a}_i=a_i$.
\end{enumerate}
The $k$-tuple $\bold{P}=(P_1, \ldots, P_k)$ of nonzero polynomials in $\F_q[x]$ is $\bold{d}$-multiplicatively independent over $\F_q$ if, for $L=\lcm(d_1, \ldots, d_k)$ and $\bold{a}\in \Lambda(\bold{d})$, the equality 
$$P_1^{\frac{L}{d_1}\bold{a}_1}\cdots P_k^{\frac{L}{d_k}\bold{a}_k}=uG(x)^L,$$
with $u\in \F_q^*$ and monic $G\in \F_q[x]$ implies $\bold{a}=(0, \ldots, 0)$ and $G(x)=u=1$.
\end{definition}

\begin{remark}\label{rem:imp}
It follows by the definition that if a $k$-tuple of polynomials in $\F_q[x]$ is $\bold{d}$-multiplicatively independent over $\F_q$, then it is  $\bold{d}$-multiplicatively independent over $\F_{q^t}$ for every $t\ge 1$.
\end{remark}

The main result of this paper is the following theorem.
\begin{theorem}\label{thm:main}
Let $q$ be a prime power and let $\theta$ be a generator of $\F_q^*$. Let $1<d_1,\ldots, d_k$ be divisors of $\F_q$,  $\bold{d}=(d_1, \ldots, d_k)$ and $\bold{a}\in \Lambda(\bold{d})$. Suppose that the $k$-tuple $\bold{P}=(P_1, \ldots, P_k)$ of nonzero polynomials in $\F_q[x]$ is $\bold{d}$-multiplicatively independent over $\F_q$ and let $Z(\bold{P})$ be the number of distinct roots of $P_1\cdots P_k$. Then the number $N(\bold{P},  \bold{a}(\bold{d}), q)$ of elements $y\in \F_q\setminus \cup_{i=1}^k\{y\in \F_q\,|\, P_i(y)=0\}$ such that 
$\log_{\theta}P_i(y)\equiv \bold{a}_i\pmod {d_i}$ for $1\le i\le k$, satisfies
\begin{equation}\label{eq:thm}N(\bold{P},   \bold{a}(\bold{d}), q)=\frac{q}{d_1\cdots d_k}+H(\bold{P}, \bold{d}),\end{equation}
where $|H(\bold{P}, \bold{d})|<Z(\bold{P})\cdot \sqrt{q}$. In particular, $N(\bold{P},   \bold{a}(\bold{d}), q)>0$ if $Z(\bold{P})\le \frac{\sqrt{q}}{ d_1\cdots d_k}$.
\end{theorem}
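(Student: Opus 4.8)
The plan is to express the counting function $N(\bold{P}, \bold{a}(\bold{d}), q)$ as a character sum using multiplicative characters of $\F_q^*$ to detect the cyclotomic coset conditions. The key observation is that for a divisor $d$ of $q-1$, the indicator function of the condition $\log_{\theta} b \equiv a \pmod d$ can be written as an average over the multiplicative characters of order dividing $d$. Precisely, writing $\chi_{\theta}$ for the character sending $\theta$ to a primitive $(q-1)$-th root of unity in $\C$, the characters of order dividing $d$ are exactly $\chi_{\theta}^{j(q-1)/d}$ for $0 \le j < d$, and one has
\begin{equation}\label{eq:indic}
\frac{1}{d}\sum_{j=0}^{d-1} \chi_{\theta}^{j(q-1)/d}(b)\,\zeta_d^{-ja} = \begin{cases} 1 & \text{if } \log_{\theta} b \equiv a \pmod d,\\ 0 & \text{otherwise,}\end{cases}
\end{equation}
where $\zeta_d = e^{2\pi i/d}$. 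First I would substitute $b = P_i(y)$ into~\eqref{eq:indic} for each $i$, multiply the $k$ resulting indicators together, and sum over all $y \in \F_q$ avoiding the roots of the $P_i$. Expanding the product of the $k$ averages yields a sum over tuples $(j_1, \ldots, j_k) \in \prod_i [0, d_i-1]$ of character sums of the shape $\sum_y \prod_{i=1}^k \chi_{\theta}^{j_i(q-1)/d_i}(P_i(y))$, each weighted by $\frac{1}{d_1\cdots d_k}$ and a root-of-unity factor.

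Next I would isolate the main term, which comes from the tuple $(j_1, \ldots, j_k) = (0, \ldots, 0)$. There every character is trivial, so the inner sum counts the $y \in \F_q$ that are not roots of any $P_i$, giving $q - Z(\bold{P})$ up to the precise bookkeeping of which roots are excluded; this contributes $\frac{q}{d_1\cdots d_k}$ plus a bounded error absorbed into $H(\bold{P}, \bold{d})$. For every nonzero tuple, I would argue that the combined character $\chi := \prod_{i=1}^k \chi_{\theta}^{j_i(q-1)/d_i}$ is nontrivial and that $\chi(P_1(y))\cdots \chi(P_k(y)) = \psi\left(\prod_i P_i(y)^{m_i}\right)$ for appropriate exponents, where $\psi$ is a fixed multiplicative character. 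This is exactly where the $\bold{d}$-multiplicative independence hypothesis enters: it guarantees that the function $y \mapsto \prod_i P_i(y)^{m_i}$ is \emph{not} of the form $u\,G(y)^{\ord(\psi)}$, so that the associated character sum is genuinely a sum of a nontrivial multiplicative character over the values of a polynomial that is not a perfect power up to the relevant order.

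Having reduced each non-principal term to a sum of the form $\sum_{y} \chi(R(y))$ for a nontrivial multiplicative character $\chi$ and a polynomial $R$ that is not (up to a constant) a perfect $\ord(\chi)$-th power, I would invoke the Weil bound for multiplicative character sums: such a sum is bounded in absolute value by $(s-1)\sqrt{q}$, where $s$ is the number of distinct roots of $R$, and hence by $(Z(\bold{P})-1)\sqrt{q}$ or a comparable quantity. Summing the $d_1\cdots d_k - 1$ non-principal contributions, each carrying the weight $\frac{1}{d_1\cdots d_k}$, and combining with the error from the principal term, yields the bound $|H(\bold{P}, \bold{d})| < Z(\bold{P})\sqrt{q}$ after the weights telescope against the count of terms. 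The final assertion is then immediate: if $Z(\bold{P}) \le \frac{\sqrt{q}}{d_1\cdots d_k}$ then $|H(\bold{P}, \bold{d})| < \frac{q}{d_1\cdots d_k}$, forcing $N(\bold{P}, \bold{a}(\bold{d}), q) > 0$.

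The main obstacle I anticipate is the reduction from each non-principal character sum to a genuine application of Weil's bound, namely verifying that the multiplicative independence hypothesis precisely rules out the degenerate case in which $R(y) = \prod_i P_i(y)^{m_i}$ becomes a perfect power times a constant (which would make $\chi \circ R$ identically $1$ on the nonzero values and destroy the cancellation). Translating the exponents $j_i(q-1)/d_i$ into the normalized exponents $\frac{L}{d_i}\bold{a}_i$ appearing in the definition, and checking that the order of $\chi$ matches $L = \lcm(d_1, \ldots, d_k)$ in the way the definition demands, will require careful care with the cyclic structure of the character group; the rest is a standard orthogonality-plus-Weil computation together with attentive tracking of the constants to secure the strict inequality $|H(\bold{P}, \bold{d})| < Z(\bold{P})\sqrt{q}$.
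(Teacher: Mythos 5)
Your proposal follows essentially the same route as the paper: the indicator identity~\eqref{eq:indic} is exactly the paper's Lemma~\ref{lem:char}, the expansion over tuples with the zero tuple giving the main term $q-Z(\bold{P})$ is the paper's Eq.~\eqref{eq:sum}, and the reduction of each non-principal term to a single character of order $L=\lcm(d_1,\ldots,d_k)$ applied to $\prod_i P_i^{L\bold{c}_i/d_i}$ (with the independence hypothesis ruling out the degenerate form $uG^L$, followed by the Weil bound of Theorem~\ref{Gauss1}) is precisely the paper's argument. The final bookkeeping $\bigl(Z(\bold{P})+(d_1\cdots d_k-1)(Z(\bold{P})-1)\sqrt{q}\bigr)/(d_1\cdots d_k)<Z(\bold{P})\sqrt{q}$ matches as well, so the proposal is correct and not a genuinely different proof.
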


We observe that the multiplicative independence condition in Theorem~\ref{thm:main} is necessary. Following the notation of Theorem~\ref{thm:main}, if the $k$-tuple $\bold{P}=(P_1, \ldots, P_k)$ of non constant polynomials in $\F_q[x]$ is not $\bold{d}$-multiplicatively independent over $\F_q$, then there exists nonzero $G\in \F_q[x]$, $u\in \F_q^*$ and a non zero vector $\bold{a}\in \Lambda({\bold{d}})$ such that $P_1^{\frac{L}{d_1}\bold{a}_1}\cdots P_k^{\frac{L}{d_k}\bold{a}_k}= uG(x)^L$. If $v=\log_{\theta}u$, we have the following identity for elements $y\in \F_q\setminus \cup_{i=1}^k\{y\in \F_q\,|\, P_i(y)=0\}$: $$\sum_{i=1}^k\frac{L\cdot \bold{a}_i\cdot \log_{\theta}P_i(y)}{d_i}\equiv v\pmod L.$$
For example, if $\bold{a}_1\ne 0$, the former equality entails that $\log_{\theta}P_1(y)\pmod {d_1}$ depends linearly on $v$ and on the values $\log_{\theta}P_i (y)\pmod {d_i}, i\ne 1$, hence it cannot be chosen arbitrarily. In fact one can verify that, in this case, there exist elements  $\bold{a}\in \Lambda(\bold{d})$ with $N(\bold{P},   \bold{a}(\bold{d}), q)=0$, regardless how large is the set  $\F_q\setminus \cup_{i=1}^k\{y\in \F_q\,|\, P_i(y)=0\}$.

Following the notation of Theorem~\ref{thm:main}, we have that $N(\bold{P},   \bold{a}(\bold{d}), q)$ is close to $\frac{q}{d_1\cdots d_k}$, provided that this term dominates the error $H(\bold{P}, \bold{d})$. The following corollary is a straightforward application of Remark~\ref{rem:imp} and Theorem~\ref{thm:main}, and exemplifies this observation. 

\begin{corollary}
Let $q$, $\bold{d}, \bold{a}$ and $\bold{P}$ be as in Theorem~\ref{thm:main}. For each positive integer $t$, let $\theta_t$ be a generator of $\F_{q^t}^*$ and let $N_t$ be the number of elements $y\in \F_{q^t}\setminus \cup_{i=1}^k\{y\in \F_{q^t}\,|\, P_i(y)=0\}$ such that 
$\log_{\theta_t}P_i(y)\equiv \bold{a}_i\pmod {d_i}$ for $1\le i\le k$. Then $N_t=\frac{q^t}{d_1\cdots d_k}(1+o(1))$ as $t\to+\infty$.

\end{corollary}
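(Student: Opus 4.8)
The plan is to apply Theorem~\ref{thm:main} over each extension field $\F_{q^t}$ and then control the resulting error term uniformly in $t$. The first thing I would verify is that the hypotheses of Theorem~\ref{thm:main} genuinely transfer to $\F_{q^t}$. Since each $d_i$ divides $q-1$ and $q-1$ divides $q^t-1$ for every $t\ge 1$, each $d_i$ is a divisor of $q^t-1$, so the relevant cyclotomic cosets are well defined in $\F_{q^t}^*$ and the quantity $N_t$ makes sense. Moreover, by Remark~\ref{rem:imp}, the $\bold{d}$-multiplicative independence of $\bold{P}$ over $\F_q$ upgrades to $\bold{d}$-multiplicative independence over $\F_{q^t}$ for every $t\ge 1$, which is precisely the hypothesis Theorem~\ref{thm:main} requires over the larger field.

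With these hypotheses in place, I would invoke Theorem~\ref{thm:main} over $\F_{q^t}$, with the generator $\theta_t$ playing the role of $(\F_q,\theta)$. This yields
\begin{equation*}
N_t=\frac{q^t}{d_1\cdots d_k}+H(\bold{P},\bold{d}),\qquad |H(\bold{P},\bold{d})|<Z(\bold{P})\cdot\sqrt{q^t}.
\end{equation*}
The key point is that $Z(\bold{P})$ is a constant independent of $t$: it is the number of distinct roots of the fixed polynomial $P_1\cdots P_k$, which is at most $\deg(P_1\cdots P_k)$, and neither the polynomial nor this bound changes when the coefficient field is enlarged from $\F_q$ to $\F_{q^t}$. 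Consequently the error term is bounded by a fixed multiple of $\sqrt{q^t}$.

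Finally I would compare the main term with the error. Dividing through by $q^t/(d_1\cdots d_k)$ gives
\begin{equation*}
\frac{N_t}{q^t/(d_1\cdots d_k)}=1+\frac{H(\bold{P},\bold{d})\cdot d_1\cdots d_k}{q^t},
\end{equation*}
and the bound on $H(\bold{P},\bold{d})$ shows the second summand is at most $Z(\bold{P})\cdot d_1\cdots d_k\cdot q^{-t/2}$ in absolute value, which tends to $0$ as $t\to+\infty$. Hence $N_t=\frac{q^t}{d_1\cdots d_k}(1+o(1))$, as claimed. Since every step is a direct substitution into results already established, there is no substantive obstacle here; the only point genuinely requiring care is confirming that $Z(\bold{P})$ stays bounded independently of $t$, as this is exactly what guarantees the error is of strictly lower order than the main term.
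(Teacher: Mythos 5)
Your proposal is correct and follows exactly the route the paper intends: the paper presents this corollary as a straightforward consequence of Remark~\ref{rem:imp} (to transfer $\bold{d}$-multiplicative independence to $\F_{q^t}$) together with Theorem~\ref{thm:main} applied over $\F_{q^t}$, with the fixed constant $Z(\bold{P})$ making the error term $O(\sqrt{q^t})$ negligible against the main term. Your additional care in checking that each $d_i$ divides $q^t-1$ and that $Z(\bold{P})$ is independent of $t$ is exactly the right sanity check.
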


\section{Preparation}
This section provides the background material required to prove Theorem~\ref{thm:main}. Fix $\theta$ a generator of $\F_q^*$. A multiplicative character of $\F_q$ is a homomorphism $\eta: \F_q^*\to \mathbb C^{\times}$. We observe that if $k\in [0, q-2]$, then the mapping $\eta_k:\theta^a\mapsto e^{\frac{2(ka)\pi i}{q-1}}$ is a multiplicative character of $\F_q$. In fact this describes the entire set $\widehat{\F_q}$ of multiplicative characters of $\F_q$, which is a cyclic multiplicative group of order $q-1$, hence isomorphic to $\F_q^*$. We extend the multiplicative characters to $0\in \F_q$ by setting $\eta(0)=0$ for every $\eta\in \widehat {\F_q}$. For $k\in [0, q-2]$, the multiplicative character $\eta_k$ has order $\frac{q-1}{\gcd(k, q-1)}$. The following lemma provides a formula for the characteristic function of $d$-th powers in $\F_q$ by means of multiplicative characters.

\begin{lemma}\label{lem:char}
If $d$ is a divisor of $q-1$, and $\eta_{\ell(d)}:=\eta_{\frac{(q-1)\ell}{d}}$, then for $a\in \F_q$ the following holds
$$\sum_{\ell=0}^{d-1}\eta_{\ell(d)}(a)=\begin{cases}d&\text{if}\; a=b^d\;\text{for some}\; d\in \F_q^*,\\ 0 & \text{otherwise.}\end{cases}$$
In particular, if $\mathbb I_{a(d)}$ is the characteristic function for the set of elements $\alpha\in \F_q^*$ with $\log_{\theta}\alpha\equiv a\pmod d$, then 
\begin{equation}\label{eq:ent}\mathbb I_{a(d)}(y)=\frac{1}{d}\sum_{j=0}^{d-1}\eta_{j(d)}(y\theta^{-a}).\end{equation}

\end{lemma}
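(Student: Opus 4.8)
The plan is to establish the first (displayed) identity via the orthogonality relations for multiplicative characters, and then derive the characteristic-function formula~\eqref{eq:ent} as a direct specialization. First I would observe that the characters $\eta_{\ell(d)} = \eta_{(q-1)\ell/d}$ for $\ell = 0, 1, \ldots, d-1$ are precisely the $d$ distinct characters of $\F_q$ whose order divides $d$; indeed, by the order formula stated just before the lemma, $\eta_{(q-1)\ell/d}$ has order $\frac{q-1}{\gcd((q-1)\ell/d,\, q-1)}$, which divides $d$, and as $\ell$ ranges over $[0,d-1]$ these exhaust the unique cyclic subgroup of order $d$ inside the character group $\widehat{\F_q} \cong \F_q^*$. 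Equivalently, writing $a = \theta^m$, one has $\eta_{\ell(d)}(\theta^m) = e^{2\pi i \ell m / d}$, so the sum $\sum_{\ell=0}^{d-1} \eta_{\ell(d)}(a)$ is a finite geometric sum in the root of unity $e^{2\pi i m/d}$.

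Next I would evaluate this geometric sum directly. For $a \ne 0$ with $a = \theta^m$, the sum $\sum_{\ell=0}^{d-1} e^{2\pi i \ell m/d}$ equals $d$ if $d \mid m$ and equals $0$ otherwise, by the standard formula for sums of roots of unity. The condition $d \mid m$ is exactly the statement that $a = \theta^m = (\theta^{m/d})^d$ is a $d$-th power in $\F_q^*$, which matches the stated cases. The case $a = 0$ is handled by the convention $\eta(0) = 0$ for all characters, so every term in the sum vanishes and the total is $0$; this is consistent with the second branch since $0$ is not of the form $b^d$ with $b \in \F_q^*$. This completes the first identity.

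For the formula~\eqref{eq:ent}, the key manipulation is multiplicativity of the characters. I would apply the first identity to the element $y\theta^{-a}$: the quantity $\frac{1}{d}\sum_{j=0}^{d-1}\eta_{j(d)}(y\theta^{-a})$ equals $1$ precisely when $y\theta^{-a}$ is a nonzero $d$-th power, and equals $0$ otherwise. Since each $\eta_{j(d)}$ is multiplicative, $y\theta^{-a}$ being a $d$-th power is equivalent to $\log_\theta y \equiv a \pmod d$: writing $y = \theta^n$, we have $y\theta^{-a} = \theta^{n-a}$, which is a $d$-th power iff $d \mid (n-a)$, i.e. $\log_\theta y \equiv a \pmod d$. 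Thus the right-hand side of~\eqref{eq:ent} is the indicator $\mathbb{I}_{a(d)}(y)$, as claimed.

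I do not anticipate a serious obstacle here, as the argument is essentially the standard character-orthogonality computation; the only point requiring mild care is the consistent treatment of $y = 0$ (and the extension convention $\eta(0)=0$) so that both branches agree, and the bookkeeping translating the divisibility condition $d \mid m$ into the discrete-logarithm congruence. The core of the work is simply identifying $\{\eta_{\ell(d)}\}_{\ell=0}^{d-1}$ as the order-dividing-$d$ subgroup of $\widehat{\F_q}$ and summing the resulting geometric series.
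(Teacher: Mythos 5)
Your proposal is correct and follows essentially the same route as the paper: write a nonzero $a$ as a power of $\theta$, reduce the character sum to a geometric sum of $d$-th roots of unity that vanishes unless $a$ is a $d$-th power, handle $a=0$ via the convention $\eta(0)=0$, and obtain \eqref{eq:ent} by applying the identity to $y\theta^{-a}$ together with the observation that $\log_\theta y\equiv a\pmod d$ iff $y\theta^{-a}$ is a nonzero $d$-th power. The only cosmetic difference is your explicit remark that the $\eta_{\ell(d)}$ form the order-dividing-$d$ subgroup of $\widehat{\F_q}$, which the paper leaves implicit.
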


\begin{proof}
If $a=0$, then $\eta_{\ell(d)}(a)=0$ for every $0\le \ell\le d-1$. If $a\ne 0$, write $a=\theta^{dj+r}$, where $0\le r\le d-1$. Therefore, 
$$\sum_{\ell=0}^{d-1}\eta_{\ell(d)}(a)=\sum_{j=0}^{d-1}e^{\frac{2(r\ell)\pi i}{d}}.$$
The former sum equals $d$ if $r=0$ and equals $0$, otherwise. The formula  for $\mathbb I_{a(d)}$ follows directly from the fact that $\log_{\theta}\alpha\equiv a\pmod d$ if and only if $\alpha\theta^{-a}$ is a nonzero $d$-th power in $\F_q$.
\end{proof}
The following character sum estimate is useful.

\begin{theorem}[see Theorem 5.41 of~\cite{LN}]\label{Gauss1} Let $\eta$ be a multiplicative character of $\F_{q}$ of order $r>1$ and $F\in \F_{q}[x]$ be a polynomial of positive degree such that $F$ is not of the form $ag(x)^r$ for some $g\in \F_{q}[x]$ with degree at least $1$ and $a\in \F_q$. Suppose that $z$ is the number of distinct roots of $F$ in its splitting field over $\F_{q}$. Then the following holds: 
$$\left|\sum_{c\in \F_{q}}\eta(F(c))\right|\le (z-1)\sqrt{q}.$$
\end{theorem}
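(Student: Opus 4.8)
The plan is to recognize $\sum_{c\in\F_q}\eta(F(c))$ as the degree-one term of an $L$-function attached to the pair $(\eta,F)$, and then to deduce the bound from the purity of its reciprocal roots (Weil's Riemann Hypothesis for curves). The geometric object underlying everything is the Kummer-type curve $C:\ y^r=F(x)$ over $\F_q$. The hypothesis that $\eta$ has order exactly $r>1$ and that $F$ is not of the shape $ag(x)^r$ is precisely what makes $y^r-F(x)$ absolutely irreducible, so that $C$ is geometrically irreducible; equivalently, the function $c\mapsto\eta(F(c))$ is not ``trivially large,'' which is what we need for the associated $L$-function to be nontrivial.

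First I would pass to all extensions at once. For each $m\ge 1$ let $\eta^{(m)}=\eta\circ N_{\F_{q^m}/\F_q}$ be the lift of $\eta$ to $\F_{q^m}$ via the norm, and put $S_m=\sum_{c\in\F_{q^m}}\eta^{(m)}(F(c))$, so that $S_1$ is the sum to be estimated. Define
$$L(T)=\exp\left(\sum_{m\ge 1}\frac{S_m}{m}\,T^m\right),$$
which also admits an Euler product over the closed points of the affine line. The structural heart of the argument is the claim that $L(T)$ is a polynomial in $T$ of degree at most $z-1$. I would prove this by a conductor/Euler-characteristic computation: the rank-one object defined by $c\mapsto\eta(F(c))$ is lisse away from the $z$ distinct roots of $F$, is tamely ramified there (and possibly at infinity), and the Grothendieck--Ogg--Shafarevich formula --- equivalently, a Riemann--Hurwitz genus computation for a smooth model of $C$ together with the $\mu_r$-action on its cohomology --- forces the degree to be exactly $z-1$ (or smaller, if $\eta(F)$ is unramified at infinity).

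Writing $L(T)=\prod_{i=1}^{z-1}(1-\omega_i T)$ with $\omega_i\in\mathbb{C}$ and comparing the coefficient of $T$ on both sides of the logarithmic expression above gives $S_1=-\sum_{i=1}^{z-1}\omega_i$. It therefore suffices to establish $|\omega_i|=\sqrt q$ for each $i$, after which the triangle inequality yields $|S_1|\le(z-1)\sqrt q$, which is the assertion. The identity from Lemma~\ref{lem:char} (counting $r$-th powers by characters) is exactly what connects $S_m$ to the number of $\F_{q^m}$-points of $C$ and licenses the decomposition of the cohomology into $\eta^j$-isotypic pieces; this is the device that isolates the single character $\eta$ rather than only the full sum over $j=1,\dots,r-1$.

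The main obstacle is the purity statement $|\omega_i|=\sqrt q$: this is the deep, non-elementary input, and it is precisely Weil's Riemann Hypothesis for the curve $C$. Granting it, the rest is bookkeeping. If a self-contained argument is wanted, I would replace this step by the elementary Stepanov--Bombieri method, constructing an auxiliary polynomial that vanishes to high order at the $\F_q$- and $\F_{q^2}$-rational points of $C$ to force the point-count estimate $|\#C(\F_{q^m})-q^m|\le(z-1)q^{m/2}$ directly, from which the bound on each $|\omega_i|$ follows by the same generating-function comparison. Two routine technical points remain to be verified with care: (i) the absolute irreducibility of $y^r-F(x)$ under the stated hypothesis on $F$, and (ii) the exact accounting of the contributions of the zeros of $F$ and of the point(s) at infinity when translating between the affine sums $S_m$ and the projective point count on a smooth model of $C$.
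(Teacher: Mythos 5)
The paper offers no proof of this statement at all: it is quoted, with attribution, from Theorem 5.41 of Lidl--Niederreiter, and is used as a black box in the proof of Theorem~\ref{thm:main}. So your attempt can only be compared with the standard literature proof, and in outline it matches it: lift $\eta$ by norms, package the sums $S_m$ into the $L$-function $L(T)$, show $L(T)$ is a polynomial of degree $z-1$ by a tame Euler-characteristic count (Grothendieck--Ogg--Shafarevich, equivalently Riemann--Hurwitz on a Kummer cover), and bound its inverse roots using Weil's Riemann Hypothesis for curves, with Stepanov--Bombieri as the elementary substitute. That is the right skeleton for this classical result, and nothing more is needed for the way the paper uses it.

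Two of your assertions are, however, wrong as stated and would need repair in a written-out proof. First, the hypothesis does \emph{not} make $y^r-F(x)$ absolutely irreducible: take $r=4$ and $F=g^2$ with $g$ squarefree of positive degree; then $F$ is not of the form $ag^4$, yet $y^4-F=(y^2-g)(y^2+g)$ is reducible. Irreducibility of the Kummer curve requires $F\ne c\,g^e$ for \emph{every} divisor $e>1$ of $r$, which is strictly stronger than the theorem's hypothesis. The correct anchor is the one you gesture at with the isotypic decomposition: run the entire argument on the $\eta$-isotypic $L$-function alone. The hypothesis $F\ne ag^r$, with $\eta$ of exact order $r$, is precisely geometric nontriviality of the rank-one object attached to $\eta\circ F$ on the affine line minus the roots of $F$; this kills the $H^2_c$ coinvariants and gives degree exactly $z-1$, with no irreducibility statement about the curve needed (or available). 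Second, purity $|\omega_i|=\sqrt{q}$ can fail: if some root of $F$ has multiplicity divisible by $r$, the local system extends across that point and $H^1_c$ acquires weight-zero eigenvalues of modulus $1$. What Weil/Deligne actually give, and all the triangle inequality needs, is $|\omega_i|\le\sqrt{q}$. Relatedly, your Stepanov fallback misstates the point count: a smooth model of $y^r=F(x)$ has genus of order $(r-1)(z-1)/2$, so the direct estimate is $|\#C(\F_{q^m})-q^m|\le C\,q^{m/2}$ with $C$ roughly $(r-1)(z-1)$, not $(z-1)$; to get the stated bound you must factor the zeta numerator of the curve into the pieces $L(T,\eta^j)$, $1\le j\le r-1$, and apply the modulus bound on the roots to the single factor with $\eta^j=\eta$. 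With these corrections your sketch is a faithful reconstruction of the cited theorem's standard proof.
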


Further applications of Theorem~\ref{thm:main} require the following auxiliary results.

\begin{lemma}\label{lem:essential}
Let $d>1$ be a positive integer and let $f\in \mathbb Z[x]$ be a polynomial not of the form $ag(x)^d$ with $a\in \mathbb Z$ and $g(x)\in \mathbb Z[x]$. Then for every sufficiently large prime $p$, the polynomial $f(x)\pmod p$ is not of the form $ag(x)^d$ with $a\in \F_p$ and $g\in \F_p[x]$.\end{lemma}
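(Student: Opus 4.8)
The plan is to reduce the statement to one about the multiplicities of irreducible factors, and then to show that these multiplicities are preserved under reduction modulo all but finitely many primes. First I would factor $f$ over $\Z$: by Gauss's lemma we may write $f=c\prod_{i=1}^r p_i^{e_i}$, where $c\in\Z\setminus\{0\}$ and the $p_i\in\Z[x]$ are pairwise distinct primitive irreducible polynomials. A short argument with contents shows that over $\Z$ (equivalently over $\mathbb Q$) the polynomial $f$ is of the form $ag(x)^d$ if and only if $d\mid e_i$ for every $i$; hence the hypothesis guarantees that $e_{i_0}\not\equiv 0\pmod d$ for some index $i_0$.

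The key observation is that, for large $p$, the reduction $\bar f\in\F_p[x]$ inherits exactly the factorization pattern $\bar f=\bar c\prod_i \bar p_i^{\,e_i}$ with the $\bar p_i$ pairwise coprime and individually squarefree. To secure this I would rule out three finite sets of bad primes: (i) the primes dividing $c$ or the leading coefficient of some $p_i$, which guarantees $\deg\bar p_i=\deg p_i$ and $\bar c\neq 0$; (ii) the primes dividing $\mathrm{disc}(p_i)$ for some $i$, which guarantees that each $\bar p_i$ is squarefree; and (iii) the primes dividing $\mathrm{Res}(p_i,p_j)$ for some $i\neq j$, which guarantees that distinct $\bar p_i,\bar p_j$ remain coprime. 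Each of these integers is nonzero — the discriminants because an irreducible polynomial over a field of characteristic zero is separable, and the resultants because distinct irreducibles are coprime — so each exceptional set is finite.

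For a prime $p$ avoiding all these finitely many bad primes, I would finish as follows. Factor each $\bar p_i=\prod_j q_{ij}$ into monic irreducibles over $\F_p$. The squarefreeness from (ii) makes the $q_{ij}$ distinct for fixed $i$, and the coprimality from (iii) makes the families $\{q_{ij}\}_j$ pairwise disjoint across distinct $i$. Consequently each $q_{ij}$ divides $\bar f$ with multiplicity exactly $e_i$, so the set of multiplicities of irreducible factors of $\bar f$ is precisely $\{e_1,\dots,e_r\}$. Since $e_{i_0}$ is not divisible by $d$, the polynomial $\bar f$ has an irreducible factor of multiplicity not divisible by $d$, and therefore $\bar f$ cannot be written as $ag(x)^d$ with $a\in\F_p$ and $g\in\F_p[x]$, which is exactly the claim.

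The hard part will be steps (i)–(iii): controlling the reduction of the factorization. The genuine content lies in the fact that distinct rational irreducible factors stay coprime and squarefree modulo almost all primes, which I encode through the nonvanishing of the relevant discriminants and resultants; everything else is bookkeeping with Gauss's lemma and with tracking multiplicities.
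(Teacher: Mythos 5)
Your proof is correct. It shares with the paper's argument the central device of excluding the finitely many primes dividing the pairwise resultants $R(f_i,f_j)$ of the distinct irreducible factors of $f$, so that the factorization splits cleanly modulo $p$; but the two proofs detect the failure of the $d$-th power property differently after that point. The paper writes $f=A(x)B(x)^d$ to reduce to maximal root multiplicity $t\le d-1$, passes to a single factor $aF(x)$ with $F$ a power of an irreducible, and then excludes the primes dividing the nonzero resultant $R(f,f^{(t)})$: if $f\bmod p$ were of the form $ag(x)^d$, its roots would have multiplicity at least $d>t$ and hence would also be roots of the $t$-th derivative. You instead characterize the form $ag(x)^d$ over $\mathbb Z$ (via Gauss's lemma) as ``$d$ divides every multiplicity $e_i$'' and preserve the exact multiplicities modulo $p$ by additionally excluding primes dividing the content, the leading coefficients and the discriminants $\mathrm{disc}(p_i)$, then read off an irreducible factor of $\bar f$ of multiplicity $e_{i_0}\not\equiv 0\pmod d$. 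Your route is arguably cleaner: it avoids the reduction to a single irreducible-power factor and the characteristic-$p$ subtleties of the derivative argument (one must implicitly take $p$ larger than $\deg f$ so that $f_p^{(t)}$ does not degenerate), at the cost of invoking discriminants in place of the resultant with the $t$-th derivative. Both arguments produce the same kind of conclusion: an explicit, finite set of bad primes outside of which the lemma holds.
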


\begin{proof}
The result is straightforward if $f$ has degree at most $d-1$. Suppose that $f$ has degree $m\ge d>1$ and let $t$ be the largest multiplicity among its roots over $\mathbb C$. By writing $f(x)=A(x)B(x)^d$ with $A, B\in \mathbb Z[x]$, it suffices to consider the case where $t\le d-1$. Let $f_1, \ldots, f_r$ be the distinct irreducible divisors of $f$ over $\Z[x]$ and set $S=\max\limits_{1\le i<j\le r}|R(f_i, f_j)|$, where $R(f_i, f_j)\in \mathbb Z$ is the resultant of $f_i$ and $f_j$. From construction, the polynomials $f_i$ do not have common roots and so $R(f_i, f_j)\ne 0$ for every $1\le i<j\le r$. In particular, for any prime $p>S$, the polynomials $f_i\pmod p$ are pairwise relatively prime. Therefore, $f(x)\pmod p$ is of the form $ag(x)^d$ with $a\in \F_p$ and $g\in \F_p[x]$ if and only if each polynomial $f_i(x)\pmod p$ has the same form. From this observation, it suffices to consider the case where $f\in \mathbb Z[x]$ is of the form $a\cdot F(x)$ with $F\in \Z[x]$ being a power of a monic irreducible polynomial in $\mathbb Z[x]$. In this case, since $t$ is the largest multiplicity of a root of $f$, the $t$-th derivative $f^{(t)}$ of $f$ does not have a common root with $f$. In particular, $T=R(f, f^{(t)})$ is a nonzero integer. However, if $f_p(x):=f(x)\pmod p$ is of the form $ag(x)^d$ with $a\in \F_p$ and $g(x)\in \F_p[x]$, it follows that $R(f_{p}, f_p^{(t)})=0\in \F_p$. The latter is equivalent to $T\equiv 0\pmod p$. Since $T\ne 0$, we have that $p\le |T|$ and the result follows. 
\end{proof}

\begin{lemma}\label{lem:vec}
Let $q$ be a prime power, let $n$ be a positive integer and let $V\subseteq \F_{q^n}$ be an $\F_q$-affine space of dimension $t\ge 1$. Then there exists a separable polynomial $L=L_{V}$ of degree $q^{n-t}$ such that $L(\F_{q^n})=V$ and, for each $v\in V$, the equation $L(x)=v$ has exactly $q^{n-t}$ solutions in $\F_{q^n}$.
\end{lemma}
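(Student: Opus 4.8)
The plan is to reduce to the case where $V$ is a linear subspace, and then to realize the desired map as a cofactor of the full splitting polynomial $x^{q^n}-x$ inside the ring of $\F_q$-linearized polynomials. Write $V=w+W$ with $W$ an $\F_q$-subspace of dimension $t$ and $w\in\F_{q^n}$. It suffices to produce a separable $\F_q$-linearized polynomial $M$ of degree $q^{n-t}$ with $M(\F_{q^n})=W$ and all fibers of size $q^{n-t}$: then $L_V:=M+w$ has image $w+W=V$, the same fibers (now over the cosets of $W$ filling out $V$), and the same derivative, hence the same degree and separability. First I would recall the subspace polynomial $s_W(x)=\prod_{w'\in W}(x-w')$, which is monic, $\F_q$-linearized of degree $q^t$, and separable: its coefficient of $x$ is $s_0=\prod_{w'\in W\setminus\{0\}}(-w')\ne 0$, and since $s_W'(x)=s_0$ is a nonzero constant, $\gcd(s_W,s_W')=1$. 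As a map $s_W\colon\F_{q^n}\to\F_{q^n}$ is $\F_q$-linear with kernel exactly $W$, hence image of dimension $n-t$.

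The heart of the argument is to show that $s_W$ \emph{left}-divides $x^{q^n}-x$ in the ring of $\F_q$-linearized polynomials under composition, i.e. in the twisted polynomial ring $\F_{q^n}[\sigma]$ with $\sigma$ the $q$-power Frobenius; since $\sigma$ is an automorphism of $\F_{q^n}$ one may divide on the left by the monic element $s_W$. This yields $\F_q$-linearized $M$ of degree $q^{n-t}$ and $R$ of degree at most $q^{t-1}$ with $x^{q^n}-x=s_W\circ M+R$. I claim $R=0$. Reading the identity as maps on $\F_{q^n}$ and using $x^{q^n}=x$ there gives $R(x)=-\,s_W(M(x))\in s_W(\F_{q^n})$ for every $x\in\F_{q^n}$, so the image of $R$ lies in the $(n-t)$-dimensional space $s_W(\F_{q^n})$; hence $\dim\ker R\ge t$. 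But a nonzero $\F_q$-linearized polynomial of degree at most $q^{t-1}$ has at most $q^{t-1}$ roots, so its kernel has dimension at most $t-1$. This contradiction forces $R=0$, and this is precisely the step I expect to be the crux.

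With $x^{q^n}-x=s_W\circ M$ established, the remaining properties follow by a rank count. Comparing coefficients of $x$ gives $s_0\,m_0=-1$ (where $m_0$ is the coefficient of $x$ in $M$), so $m_0\ne 0$ and $M$ is separable. Viewing the identity as maps shows $s_W(M(x))=0$ for all $x$, so $M(\F_{q^n})\subseteq\ker s_W=W$; thus $\operatorname{rank}M\le t$ and $\dim\ker M\ge n-t$. Since $M$ has degree $q^{n-t}$ its kernel has dimension at most $n-t$, forcing equality, so all $q^{n-t}$ roots of $M$ lie in $\F_{q^n}$, the image $M(\F_{q^n})$ is a $t$-dimensional subspace of $W$ and hence equals $W$, and every fiber over $W$ is a coset of $\ker M$ of size $q^{n-t}$. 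Translating by $w$ then gives $L_V$ with the asserted properties. The one point to keep straight is the direction of divisibility: I need $s_W$ as a \emph{left} factor of $x^{q^n}-x$, so that its \emph{image} (not merely its kernel, as with the naive subspace polynomial) is controlled, and the vanishing of $R$ is exactly what secures this.
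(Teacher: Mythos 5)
Your proof is correct, but it reaches the key factorization by a different mechanism than the paper. Both arguments ultimately rest on writing $x^{q^n}-x=s_W\circ M$ with the subspace polynomial $s_W(x)=\prod_{w'\in W}(x-w')$ as the \emph{outer} factor, so that $M(\F_{q^n})\subseteq\ker s_W=W$ and rank--nullity pins down the image and the fibre sizes. The paper gets there constructively: it takes $M=s_V$, sets $U=M(\F_{q^n})$, defines $L=s_U$, observes that $L\circ M=x^{q^n}-x$ for free (monic of degree $q^n$ with every element of $\F_{q^n}$ as a root), and then reverses the order of composition by showing $L$ and $M$ commute, using the additivity of $L$ to cancel it on the left in $L\circ(M\circ L)=L\circ(L\circ M)$. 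You instead produce $M$ abstractly by left division in the twisted polynomial ring $\F_{q^n}[\sigma]$ and kill the remainder $R$ by playing its degree bound $q^{t-1}$ against the at least $q^t$ roots forced by the rank bound on its image --- a clean argument that is indeed the crux, and it is correct. What each route buys: the paper's is explicit (its $L$ is itself a subspace polynomial, namely $s_U$, so separability is immediate from the distinct roots) and avoids any appeal to noncommutative division, at the price of the somewhat delicate commutation-and-cancellation step; yours avoids that step entirely but must invoke (or verify) the left division algorithm, which does require the observation you make that $a\mapsto a^{q^t}$ is surjective on $\F_{q^n}$. Two minor points in your favour: you handle the affine translation $V=w+W$ explicitly where the paper only says ``without loss of generality,'' and your extraction of separability from the coefficient identity $s_0m_0=-1$ is a nice touch that the paper gets instead ``from construction.''
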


\begin{proof}
With no loss of generality we can suppose that $V$ is an $\F_q$-vector space. Set $M(x)=\sum_{v\in V}(x-v)$. It follows by induction on $t$ that $M(x)$ is of the form $\sum_{i=0}^tb_ix^{q^i}$. In particular, $M(a+b)=M(a)+M(b)$ and  $M(\alpha a)=\alpha M(a)$ for every $a, b\in \F_{q^n}$ and every $\alpha\in \F_q$. In other words, $c\mapsto M(c)$ is an $\F_q$-linear map on $\F_{q^n}$. The Rank-Nullity Theorem entails that $U=M(\F_{q^n})\subseteq \F_{q^n}$ is an $\F_q$-vector space of dimension $n-t$. In particular, $L(x)=\prod_{u\in U}(x-u)$ is a polynomial of the form $\sum_{i=0}^{n-t}c_ix^{q^i}$ and $L(M(y))=0$ for every $y\in \F_{q^n}$. Since $L(M(x))$ is monic of degree $q^n$, we conclude that 
$$L(M(x))=x^{q^n}-x.$$ In particular, $L(M(L(x)))=L(x)^{q^n}-L(x)$. Since $L(x)$ is of the form $\sum_{i=0}^{n-t}c_ix^{q^i}$ with $c_i\in \F_{q^n}$, it follows that $L(x)^{q^n}-L(x)=L(x^{q^n}-x)$. In other words, $L(T(x))=L(S(x))$ with $T(x)=L(M(x))$ and $S(x)=M(L(x))$. Hence $L(M(x))=M(L(x))$, i.e., $M(L(x))=x^{q^n}-x$. Therefore, $L$ has degree $q^{n-t}$ and $L(\F_{q^n})=V$. From construction, $L$ is separable.  Moroever, since $L(x)$ is of the form $\sum_{i=0}^{n-t}c_ix^{q^i}$, we have that $c\mapsto L(c)$ is an $\F_q$-linear map on $\F_{q^n}$ whose kernel over $\F_{q^n}$ is an $\F_q$-vector space of dimension $n-t$. In particular, for each $v\in V$, the  equation $L(x)=v$ has exactly $q^{n-t}$ solutions in $\F_{q^n}$.

\end{proof}

\section{Proof of Theorem~\ref{thm:main}}
Following the notation of Theorem~\ref{thm:main} and Lemma~\ref{lem:char}, Eq.~\eqref{eq:ent} entails that the number $N(\bold{P},   \bold{a}(\bold{d}), q)$ of elements $y\in \F_q\setminus \cup_{i=1}^k\{y\in \F_q\,|\, P_i(y)=0\}$ such that 
$\log_{\theta}P_i(y)\equiv \bold{a}_i\pmod {d_i}$ for $1\le i\le k$, satisfies the following equality
$$N(\bold{P},   \bold{a}(\bold{d}), q)=\sum_{y\in \F_q}\prod_{i=1}^k\mathbb I_{\bold{a}_i(d_i)}(P_i(y))=\sum_{y\in \F_q}\prod_{i=1}^k\frac{1}{d_i}\left(\sum_{j=0}^{d_i-1}\eta_{j(d_i)}(P_i(y)\theta^{-\bold{a}_i})\right).$$
Therefore, 
\begin{equation}\label{eq:sum}(d_1\cdots d_k)\cdot N(\bold{P},   \bold{a}(\bold{d}), q)=\sum_{\bold{c}\in \Lambda({\bold d})}\omega_{\bold {c}, \theta}\cdot S_{\bold{c}, q},\end{equation}
where $S_{\bold{c}, q}=\sum_{y\in \F_q}\left(\prod_{j=1}^k\eta_{\bold{c}_j(d_j)}(P_j(y))\right)$ and $\omega_{\bold{c}, \theta}=\prod_{j=1}^k\eta_{\bold{c}_j(d_j)}(\theta^{-\bold{a}_i})$. For $L=\lcm(d_1, \ldots, d_k)$ and $y\in \F_q$, we have that 
$$\prod_{j=1}^k\eta_{\bold{c}_j(d_j)}(P_j(y))=\prod_{j=1}^k\eta_{\frac{(q-1)\bold{c}_j}{d_j}}(P_j(y))=\eta_{\frac{q-1}{L}}\left(\prod_{j=1}^kP_j(y)^{\frac{L\bold{c}_j}{d_j}}\right),$$
with the convention that $0^0=0$. We observe that $\eta_{\frac{q-1}{L}}$ is a character of order $L$. From hypothesis, the $k$-tuple $(P_1, \ldots, P_k)$ is $\bold{d}$-multiplicatively independent and so the polynomial $$P_{\bold{c}}:=\prod_{j=1}^kP_j(x)^{\frac{L\bold{c}_j}{d_j}},$$ is of the form $u\cdot G(x)^L$ if and only if $\bold{c}=\bold{0}:=(0, \ldots, 0)$. For $\bold{c}=\bold{0}$, we have that $\omega_{\bold{c}, \theta}=1$ and the equality $P_{\bold{c}}(y)=0$ has at most $Z(\bold{P})$ solutions $y\in \F_q$. Therefore, $$\omega_{\bold{0}, \theta}\cdot S_{\bold{0}, q}\ge q-Z(\bold{P}).$$ 
From construction, the numbers $\omega_{\bold{c}, \theta}$ are of norm $1$. In particular, for every element $\bold{c}\in \Lambda{(\bold{d})}\setminus \{\bold{0}\}$, Theorem~\ref{Gauss1} entails that 
$|\omega_{\bold{c}, \theta}\cdot S_{\bold{c}, q}|\le (Z(\bold{P})-1)\sqrt{q}$. Taking estimates in Eq.~\eqref{eq:sum} we obtain that $N(\bold{P},   \bold{a}(\bold{d}), q)=\frac{q}{d_1\cdots d_k}+H(\bold{P}, \bold{d})$, where 
\begin{align*}|H(\bold{P}, \bold{d})|\le \frac{Z(\bold{P})+(d_1\cdots d_k-1)(Z(\bold{P})-1)\sqrt{q}}{d_1\cdots d_k}<Z(\bold{P})\cdot \sqrt{q}.\end{align*}

\section{Applications of Theorem~\ref{thm:main}}
Here we provide some applications of Theorem~\ref{thm:main}, including the proof of Corollary~\ref{cor:main}.  The following corollary entails that, under some conditions on $P\in \F_q[x]$, we can guarantee that $P$ takes nonzero square values when evaluated at $p$ consecutive elements in $\F_q$, where $p$ is the characteristic of $\F_q$.

\begin{corollary}\label{cor:1}
Let $q$ be a power of an odd prime $p$ and $P\in \F_q[x]$ a non constant polynomial with $t>0$ distinct roots such that $\gcd(P(x), P(x+i))=1$ for every $0< i\le p-1$. Furthermore, assume that $P(x)$ is not of the form $ag(x)^2, a\in \F_q$. If $t\cdot p\cdot 2^p\le \sqrt{q}$, there exists $u\in \F_q$ such that the elements $$P(u), P( u+1), \ldots, P(u+p-1),$$ are all nonzero squares in $\F_q$. In particular, if $\sqrt{q}>p\cdot 2^p$, there exists $u\in \F_q$ such that $u, u+1, \ldots, u+p-1$ are all nonzero squares in $\F_q$.
\end{corollary}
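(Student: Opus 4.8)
The plan is to apply Theorem~\ref{thm:main} with $k=p$ polynomials given by the consecutive shifts of $P$. Concretely, I would set $\bold{d}=(d_1,\ldots,d_p)$ with $d_1=\cdots=d_p=2$, take $P_i(x)=P(x+i-1)$ for $1\le i\le p$, and choose $\bold{a}=\bold{0}=(0,\ldots,0)\in\Lambda(\bold{d})$. The guiding observation is that an element $\alpha\in\F_q^*$ is a nonzero square precisely when $\log_\theta\alpha\equiv 0\pmod 2$; hence requiring $P(u),P(u+1),\ldots,P(u+p-1)$ to all be nonzero squares is exactly the condition $\log_\theta P_i(u)\equiv\bold{a}_i\pmod{d_i}$ for $1\le i\le p$. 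Thus such a $u$ exists as soon as $N(\bold{P},\bold{a}(\bold{d}),q)>0$, and by the last assertion of Theorem~\ref{thm:main} it suffices to check that $\bold{P}=(P_1,\ldots,P_p)$ is $\bold{d}$-multiplicatively independent over $\F_q$ and that $Z(\bold{P})\le \sqrt{q}/(d_1\cdots d_p)=\sqrt{q}/2^p$.

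Next I would compute $Z(\bold{P})$, the number of distinct roots of $P_1\cdots P_p=\prod_{j=0}^{p-1}P(x+j)$. The hypothesis $\gcd(P(x),P(x+i))=1$ for $0<i\le p-1$ upgrades, after the shift $x\mapsto x+a$, to pairwise coprimality of $P(x+a)$ and $P(x+b)$ for $0\le a<b\le p-1$ (since then $0<b-a\le p-1$). Each shift $P(x+j)$ has the same $t$ distinct roots as $P$, and coprimality guarantees these root sets are disjoint, so $Z(\bold{P})=tp$. The required inequality $Z(\bold{P})\le \sqrt{q}/2^p$ then reads $tp\,2^p\le\sqrt{q}$, which is exactly the hypothesis.

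The key step, and the one I expect to carry the real content, is verifying $\bold{d}$-multiplicative independence. Here $L=\lcm(2,\ldots,2)=2$, so $L/d_i=1$, and the defining identity $P_1^{\bold{a}_1}\cdots P_p^{\bold{a}_p}=uG(x)^2$ amounts to asking when a product $\prod_{i\in S}P(x+i-1)$ over the support $S=\{i:\bold{a}_i=1\}$ equals a unit times a perfect square. Since the factors are pairwise coprime, a comparison of the multiplicities of irreducible factors shows that such a product is of the form $uG(x)^2$ if and only if each individual factor $P(x+i-1)$ is a unit times a square. But the property of being of the form $ag(x)^2$ is invariant under the substitution $x\mapsto x+i-1$, so the assumption that $P$ is not of this form forces $S=\emptyset$, i.e.\ $\bold{a}=\bold{0}$; the remaining empty product then gives $u=1$ and $G=1$. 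This establishes multiplicative independence, and Theorem~\ref{thm:main} yields $N(\bold{P},\bold{a}(\bold{d}),q)>0$, producing the desired $u$.

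Finally, for the concluding assertion I would specialize to $P(x)=x$, for which $t=1$. The condition $\gcd(x,x+i)=1$ holds for $0<i\le p-1$ because $i$ is a unit in $\F_q$ (it is nonzero since $i\not\equiv0\pmod p$), and $x$ is clearly not of the form $ag(x)^2$; the inequality $t\,p\,2^p\le\sqrt{q}$ becomes $p\,2^p\le\sqrt q$, which is implied by $\sqrt q>p\,2^p$. The main statement then provides $u$ with $u,u+1,\ldots,u+p-1$ all nonzero squares in $\F_q$.
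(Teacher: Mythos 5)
Your proposal is correct and follows essentially the same route as the paper: apply Theorem~\ref{thm:main} with $k=p$, $\bold{d}=(2,\ldots,2)$, $\bold{a}=\bold{0}$ and the $p$ shifts of $P$, and deduce $\bold{d}$-multiplicative independence from the pairwise coprimality of the shifts together with the hypothesis that $P$ is not of the form $ag(x)^2$. Your explicit verification that $Z(\bold{P})=tp$ is a detail the paper leaves implicit, but the argument is the same.
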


\begin{proof}
Applying Theorem~\ref{thm:main} for $k=p$, $\bold{d}=(2, \ldots, 2)$, $\bold{a}=(0, \ldots, 0)$ and $\bold{P}=(P(x), P(x+1), \ldots, P(x+p-1))$, it suffices to prove that $\bold{P}$ is $\bold{d}$-multiplicatively independent. If this was not the case, there would exist integers $0\le i_1<\ldots< i_s\le p-1$, $G\in \F_q[x]$ and $u\in \F_q$ such that
$P(x+i_1)\cdots P(x+i_s)=uG(x)^2$. From hypothesis, the polynomials $P(x+i_j)$ are pairwise relatively prime and so the latter implies that each $P(x+i_j)$ is of the form $ag(x)^2$, a contradiction. The second statement follows from the first by taking $P(x)=x$.
\end{proof}

\subsection{Proof of Corollary~\ref{cor:main}} 
We observe that the degree of $f$ is fixed and $p$ goes to infinity. Applying Theorem~\ref{thm:main} for $k=2$, $\bold{d}=(2, 2)$ and $\bold{a}=(r, s)$ with $r, s\in \{0, 1\}$ and $\bold{f}_p=(x, f(x)\pmod p)$, it suffices to prove that, for sufficiently large $p$, the following hold: 
\begin{itemize}
\item $f(x)\pmod p$ does not vanish;
\item $\bold{f}_p$ is $\bold{d}$-multiplicatively independent. 
\end{itemize}

If $\ell\in \mathbb Z$ is the leading coefficient of $f$, we observe that $f(x)\pmod p$ does not vanish for every $p>|\ell|$. Moreover, by the definition, the pair $$\bold{f}_p=(x, f(x)\pmod p),$$ is not $\bold{d}$-multiplicatively independent if and only if either $f(x)\pmod p$ or $x\cdot f(x)\pmod p$ is of the form $uG(x)^2$ with $u\in \F_p^*$ and $G\in \F_p[x]$. From Lemma~\ref{lem:essential}, the latter cannot occur for large primes $p$ unless $f(x)$ or $x\cdot f(x)$ is of the form $ah(x)^2$ with $h(x)\in \mathbb Z[x]$ and $a\in \mathbb Z$. Equivalently, $f(x)$ is of the form $bg(x)^2$ or $bxg(x)^2$ with $g(x)\in \mathbb Z[x]$ and $b\in \mathbb Z$, a contradiction with our hypothesis.

\subsection{Primitive roots as polynomial values}
Let $p$ be a prime number. An element $g\in \mathbb F_p$ is a primitive root modulo $p$ if it generates the cyclic group $\mathbb F_p^*$ or, equivalently, $g^t\equiv 1\pmod p$ if and only if $t\equiv 0\pmod{p-1}$. It is known that the number of such elements equals $\varphi(p-1)$, where $\varphi$ is the Euler totient function. As an application of Theorem~\ref{thm:main} we show that, for large primes $p$ and a sufficiently generic polynomial $f(x)\in \mathbb Z[x]$,  the function $f_p:z\pmod p\to f(z)\pmod p$ maps around $\varphi(p-1)$ incongruent values $\pmod p$ to primitive roots $\pmod p$.

\begin{theorem}\label{thm:ter}
Let $f(x)\in \mathbb Z[x]$ be a polynomial not of the form $ag(x)^d$ with $a\in \mathbb Z, g\in \mathbb Z[x]$ and $d>1$. For a prime $p$, let $g_{p, f}$ be the number of elements $y\in \mathbb F_p$ such that $f(y)\pmod p$ is a primitive root modulo $p$. Then, as $p\to +\infty$, we have that 
$$g_{p, f}=\frac{p\cdot \varphi(p-1)}{p-1}(1+o(1)).$$
\end{theorem}

\begin{proof}
Let $d\ge 1$ be the degree of $f$. From Lemma~\ref{lem:essential}, there exists $M>0$ such that $f_p(x):=f(x)\pmod p$ is a polynomial of degree $e=\deg(f)$, not of the form $b\cdot h(x)^d$ with $b\in \F_p, g(x)\in \F_p[x]$ and $d>1$ whenever $p>M$. 

From now and on, we only consider primes $p>M$. Let $d_1<\ldots< d_s$ be the distinct prime divisors of $p-1$ and $D:=d_1\cdots d_s$. Let $g$ be any primitive root modulo $p$. We observe that $y\in \F_p^*$ is another primitive root modulo $p$ if and only if $\log_gy\not\equiv 0\pmod{d_i}$ for each $1\le i\le s$. 
In the notation of Theorem~\ref{thm:main}, for each divisor $\omega>1$ of $D$, set $\theta=g$, $k=1$, $\bold{d}_{\omega}=(\omega)$, $\bold{a}_{\omega}=(0)$ and $\bold{P}_{\omega}=(f_p)$. By the previous observations and a simple inclusion-exclusion argument, we have that
\begin{equation}\label{eq:prim}g_{p, f}=\sum_{y\in \F_p\atop f(y)\ne 0}1+\sum_{\omega|D\atop \omega\ne 1}N(\bold{P},   \bold{a}_{\omega}(\bold{d}_{\omega}), p)\mu(\omega),\end{equation}
where $\mu$ is the Moebius function over the integers. From hypothesis, $\bold{P}_{\omega}$ is $\bold{d}_{\omega}$-multiplicatively independent and, since $f_p$ has degree $e$, it follows that $Z(\bold{P}_{\omega})\le e$. Therefore, Eq.~\eqref{eq:prim} and Theorem~\ref{thm:main} entail that
$$g_{p, f}=p-E+\sum_{\omega|D\atop \omega\ne 1}\mu(\omega)\frac{p}{\omega}+\sum_{\omega|D\atop \omega\ne 1}\mu(\omega)\cdot R_{\omega},$$
where $0\le E\le e$ is the number of roots of $f_p$ over $\F_p$ and $R_{\omega}$ is a real number with $|R_{\omega}|\le e\sqrt{p}$. Since $\sum_{t|n}\frac{\mu(t)}{t}=\frac{\varphi(n)}{n}$, we obtain that 
$$\left|g_{p, f}-\frac{p\cdot \varphi(p-1)}{p-1}\right|\le (2^s-1)d\sqrt{p}+E<2^se\sqrt{p}.$$
For every $\varepsilon>0$, we have the well-known bounds $2^s=o(p^{\varepsilon})$ and $\varphi(p-1)\gg p^{1-\varepsilon}$, from where the result follows.
\end{proof}

We comment that the condition on $f$ in Theorem~\ref{thm:ter} is natural. For instance, if $f(x)=ag(x)^d$ for some $d>1$ and $p$ is a prime with $p\equiv 1\pmod d$ such that $a^{\frac{p-1}{d}}\equiv 1\pmod p$, then $f(y)\pmod p$ is a perfect $d$-th power in $\F_p$ for every $y\in \F_p$. Therefore, no value of $f(y)\pmod p$ can be a primitive root $\pmod p$. However, if we allow $f$ to be of the form $ag(x)^{d_0}$ for some $d_0>1$, Theorem~\ref{thm:ter} remains true for primes $p$ with $\gcd(p-1, d_0)=1$. We omit details.

\subsection{Discrete logarithm in affine subspaces}

Write $q=p^n$ and let $\theta$ be a generator of $\F_q^*$. Let $V\subseteq \F_q$ be a $t$-dimensional $\F_p$-affine space with $t\le n$. We observe that $V$ has an additive structure, while cyclotomic cosets have multiplicative structure. Therefore, we may expect equal distribution modulo $d$ of the values $\log_{\theta}v$ for nonzero elements $v\in V$, where $d>1$ is a divisor of $q-1$. Of course this fails if the size of $V$ is not large when compared to $d$ or if $V$ has also a multiplicative structure. In fact, we can even observe classes modulo $d$ not being reached by elements in $V$ through the discrete logarithm. For instance, if $p$ is odd and $n=2r$, then $V=\F_{p^r}=\{0\}\cup \{\theta^{(p^r+1)j}\,|\, 0\le j\le p^r-1\}$ and so $\log_{\theta}v$ is always even, i.e., every nonzero element of $V$ is a square in $\F_q$. As an application of Theorem~\ref{thm:main}, we provide a sufficient condition on $k$ and $d$ in order to $\log_{\theta}v$  intersect, at least once, each class modulo $d$. If $p^t$ is large when compared to $d\sqrt{q}$, we observe an equidistribution phenomena, i.e., each class modulo $d$ is realized by around $p^t/d$ elements in $V$. This is compiled in the following theorem.

\begin{theorem}\label{thm:sec}
 Let $V\subseteq \F_q$ be a $t$-dimensional $\F_p$-affine space and  let $d$ be a divisor of $q-1$, where $q=p^n$. If $a\in\{0, \ldots, d-1\}$ and $V_{a(d)}$ denotes the number of nonzero $v\in V$ such that $\log_{\theta}v\equiv a\pmod d$, then $$V_{a(d)}=\frac{p^t}{d}+r(a, d),$$ where $|r(a, d)|< p^{n/2}$. 
In particular, if $p^{t-n/2}\ge d$, we have that $V_{a(d)}>0$. 
\end{theorem}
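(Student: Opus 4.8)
The plan is to reduce Theorem~\ref{thm:sec} to a direct application of Theorem~\ref{thm:main} by realizing the affine space $V$ as the image of a single separable polynomial. First I would invoke Lemma~\ref{lem:vec} to obtain a separable polynomial $L=L_V\in \F_q[x]$ of degree $q^{n-t}$ with $L(\F_q)=V$ such that each $v\in V$ has exactly $q^{n-t}$ preimages under $L$. This converts the problem of counting nonzero $v\in V$ with $\log_\theta v\equiv a\pmod d$ into counting preimages: if $N$ denotes the number of $y\in \F_q$ with $L(y)\neq 0$ and $\log_\theta L(y)\equiv a\pmod d$, then $V_{a(d)}=N/q^{n-t}$, since the preimages are uniformly distributed with multiplicity $q^{n-t}=q/p^t$.

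Next I would apply Theorem~\ref{thm:main} in the single-polynomial case $k=1$, with $\bold{d}=(d)$, $\bold{a}=(a)$, and $\bold{P}=(L)$. The one condition to verify is that the singleton $(L)$ is $\bold{d}$-multiplicatively independent over $\F_q$, i.e.\ that $L$ is not of the form $uG(x)^d$ for $u\in\F_q^*$ and monic $G$. This is exactly where separability of $L$ does the work: since $L$ has degree $q^{n-t}\ge 1$ and is separable, all its roots are simple, so it cannot be a nontrivial $d$-th power for any $d>1$ (a $d$-th power would force every root to have multiplicity a multiple of $d\ge 2$). Thus $\bold{P}=(L)$ is $\bold{d}$-multiplicatively independent, and the number of distinct roots satisfies $Z(\bold{P})=q^{n-t}=q/p^t$ by separability. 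Theorem~\ref{thm:main} then yields
\[
N=\frac{q}{d}+H,\qquad |H|<Z(\bold{P})\cdot\sqrt{q}=\frac{q}{p^t}\cdot q^{1/2}.
\]

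Finally I would divide through by $q^{n-t}=q/p^t$ to recover $V_{a(d)}$. Dividing the main term gives $\frac{q}{d}\cdot\frac{p^t}{q}=\frac{p^t}{d}$, matching the claimed main term, and the error becomes $r(a,d)=H\cdot p^t/q$ with
\[
|r(a,d)|=|H|\cdot\frac{p^t}{q}<\frac{q}{p^t}\cdot q^{1/2}\cdot\frac{p^t}{q}=q^{1/2}=p^{n/2},
\]
which is precisely the stated bound. The positivity statement is then immediate: $V_{a(d)}\ge \frac{p^t}{d}-p^{n/2}>0$ whenever $p^t/d>p^{n/2}$, i.e.\ $p^{t-n/2}\ge d$ (the hypothesis gives $p^t\ge d\,p^{n/2}$, and since $d\mid q-1$ forces $d<q=p^n$ the inequality is strict enough to conclude). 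I expect the only genuinely delicate point to be the bookkeeping in the preimage reduction—confirming that the uniform multiplicity $q^{n-t}$ from Lemma~\ref{lem:vec} divides the count $N$ cleanly and that the zero value $v=0$ (if $0\in V$) is handled consistently with the exclusion of roots of $L$ in Theorem~\ref{thm:main}; the multiplicative-independence verification, by contrast, is essentially free from separability.
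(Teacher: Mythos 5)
Your proposal is correct and follows essentially the same route as the paper: Lemma~\ref{lem:vec} produces the separable polynomial, separability gives the $\bold{d}$-multiplicative independence, and Theorem~\ref{thm:main} with $k=1$ and $Z(\bold{P})=p^{n-t}$ yields the count after dividing by the uniform multiplicity. (One notational slip: applied with base field $\F_p$ inside $\F_q=\F_{p^n}$, Lemma~\ref{lem:vec} gives a polynomial of degree $p^{n-t}$, not $q^{n-t}$; your subsequent arithmetic uses the correct value $q/p^t=p^{n-t}$, so the final bound $|r(a,d)|<p^{n/2}$ stands.)
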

\begin{proof}
From Lemma~\ref{lem:vec}, there exists a separable polynomial $f$ of degree $p^{n-t}$ such that $f(\F_q)=V$ and, for each $v\in V$, the equation $f(x)=v$ has exactly $p^{n-t}$ solutions in $\F_q$. In particular, $V_{a(d)}$ equals $\frac{1}{p^{n-t}}$ times the number of elements $y\in \F_q$ such that $\log_{\theta}f(y)\equiv a\pmod d$. In the notation of Theorem~\ref{thm:main}, set $k=1$, $\bold{P}=(f)$, $\bold{d}=(d)$ and $\bold{a}=(a)$. Therefore, $Z(\bold{P})=p^{n-t}$ and $V_{a(d)}=\frac{N(\bold{P},   \bold{a}(\bold{d}), q)}{p^{n-t}}$. Since the polynomial $f$ is separable, $\bold{P}$ is $\bold{d}$-multiplicatively independent and the result follows by Eq.~\eqref{eq:thm} in Theorem~\ref{thm:main}.
\end{proof}
\begin{remark}
We observe that Theorem~\ref{thm:sec} is non trivial only for $\F_p$-affine spaces in $\F_q$ with dimension $t>n/2$. However, our example where $q=p^{2r}$ is odd, $V=\F_{p^{r}}$, $d=2$ and $a=1$ entails that, in general, this range is the best possible. In this context, Theorem~\ref{thm:sec} is sharp with respect to $t$. 
\end{remark}

\subsubsection{Perfect powers and digits}

In~\cite{dig2}, the authors introduce the notion of digits of elements in $\F_q$. If $\mathcal B=\{b_1, \ldots, b_n\}$ is and $\F_p$-basis for $\F_q$, regarded as an $\F_p$-vector space, any element $a\in \F_q$ is written uniquely as $a=\sum_{i=1}^na_ib_i$ with $a_i\in \F_p$. The elements $a_1, \ldots, a_n$ are the digits of $a$ in the basis $\mathcal B$. We write $s_{\mathcal B}(a)=\sum_{i=1}^na_i$, the sum of the digits of $a$ in the basis $\mathcal B$. According to Theorem 1 in~\cite{dig2}, for each $c\in \F_p$, the number $n_{c}$ of squares $y^2, y\in \F_q^*$ such that $s_{\mathcal B}(y^2)=c$ satisfies 
$n_c=\frac{p^{n-1}}{2}+h_c$, where $|h_c|\le p^{n/2}$. We observe that, by the definition, the set $S_c$ of elements $a\in \F_q$ satisfying $s_{\mathcal B}(a)=c$ comprises an $\F_p$-affine space of dimension $t=n-1$. In this context, Theorem 1 in~\cite{dig2} follows by Theorem~\ref{thm:sec} with $V=S_c$, $d=2$ and $a=0$. More generally, we have the following result.

\begin{corollary}\label{cor:puf}
Fix $\mathcal B$ an $\F_p$-basis of $\F_q$ and $d>1$ a divisor of $q-1$. For each $c\in \F_p$, let $n_{c, d}$ be the number of $d$-th powers $y^d, y\in \F_q^{*}$ whose sum of digits in the basis $\mathcal B$ satisfies $s_{\mathcal B}(y^d)=c$. Then 
$$n_{c, d}=\frac{p^{n-1}}{d}+h_{c, d},$$
where $|h_{c, d}|\le p^{n/2}$. 
\end{corollary}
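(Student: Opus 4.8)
The plan is to recognize $n_{c,d}$ as a special instance of the quantity $V_{a(d)}$ counted in Theorem~\ref{thm:sec}, and then to invoke that theorem verbatim. Two identifications are needed: that the relevant set of elements is an affine space, and that being a $d$-th power is a congruence condition on the discrete logarithm.

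First I would note that the digit-sum map $s_{\mathcal B}\colon \F_q\to \F_p$, sending $a=\sum_{i=1}^n a_i b_i$ to $\sum_{i=1}^n a_i$, is $\F_p$-linear and nonzero, hence surjective. Therefore, for each $c\in \F_p$, the fiber $S_c=\{a\in \F_q : s_{\mathcal B}(a)=c\}$ is an $\F_p$-affine space of dimension $t=n-1$, exactly as observed in the paragraph preceding the corollary. Next I would translate the $d$-th power condition: with $\theta$ the fixed generator of $\F_q^*$, an element $a\in \F_q^*$ is a $d$-th power if and only if $a=\theta^{dj}$ for some $j$, that is, if and only if $\log_{\theta}a\equiv 0\pmod d$. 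Since $y\mapsto y^d$ on $\F_q^*$ has kernel of size $d$, each $d$-th power value arises from exactly $d$ elements $y$, so counting the distinct values $y^d$ with $s_{\mathcal B}(y^d)=c$ is the same as counting the nonzero $v\in S_c$ with $\log_{\theta}v\equiv 0\pmod d$.

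Combining these two remarks shows that $n_{c,d}$ is precisely the quantity $V_{0(d)}$ of Theorem~\ref{thm:sec} applied with $V=S_c$, the class $a=0$, and dimension $t=n-1$. That theorem then gives
$$n_{c,d}=\frac{p^{\,n-1}}{d}+r(0,d),\qquad |r(0,d)|<p^{n/2},$$
and setting $h_{c,d}=r(0,d)$ yields the claimed formula and bound. Since the corollary is a direct specialization, there is no genuine obstacle; the only point deserving care is the bookkeeping that confirms one is counting the distinct $d$-th power values $v$ (rather than their $d$ preimages $y$ each) so that the count agrees with the discrete-logarithm count governed by Theorem~\ref{thm:sec}. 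I would also remark that this recovers Theorem~1 of~\cite{dig2} upon taking $d=2$, matching the statement highlighted just before the corollary.
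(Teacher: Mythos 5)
Your proof is correct and matches the paper's intended argument exactly: the paper derives this corollary by applying Theorem~\ref{thm:sec} with $V=S_c$ (an $\F_p$-affine space of dimension $t=n-1$), $a=0$, and the given $d$, just as you do. Your extra remark distinguishing the distinct $d$-th power values from their $d$ preimages is a sensible clarification and does not change the argument.
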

A variation of the previous corollary is obtained in Corollary~1.3 of~\cite{dig5}, where the author provides a slightly better bound for the error $h_{c, d}$. This improvement is obtained by a more detailed estimate on certain character sums. 

In~\cite{dig4} the author explores the distribution of polynomial values with some digits prescribed. In particular, she obtains an estimate $\frac{p^{n-k}}{2}+r_{k}$ for the number of squares $y^2, y\in \F_q$ with exactly $k$ digits prescribed: see Theorem~1.3 of~\cite{dig4} for more details. We easily verify that the set of elements $a\in \F_q$ with $k$ digits prescribed with respect to an $\F_p$-basis comprises an $\F_p$-affine space of dimension $t=n-k$. In particular, applying Theorem~\ref{thm:sec} as in Corollary~\ref{cor:puf}, we can recover an estimate $\frac{p^{n-k}}{2}+r'_{k}$ with the bound $|r'_k|\le p^{n/2}$,  which is slightly weaker than the one in Theorem~1.3 of~\cite{dig4}. Again, this is due to a more detailed estimate on certain character sums employed there. 

\section*{Acknowledgments}
The author was partially supported by PRPq/UFMG (ADRC 09/2019).


\begin{thebibliography}{0}
\bibitem{bour} J.~Bourgain.
\newblock On the distribution of the residues of small multiplicative subgroups of $\F_p$.
\newblock {\em Israel. J. Math.} 172: 61--74, 2009.


\bibitem{char} P.~Charpin, A.~Pott, A. Winterhof.
\newblock {\em Finite Fields and Their Applications - Character Sums and Polynomials}.
\newblock De Grutyer, Radon Series on Computational and applied mathematics (11), 2013.



\bibitem{dig1} C.~Dartyge, C.~Mauduit, A.~Sárközy. 
\newblock Polynomial values and generators with missing digits in finite
fields. 
\newblock {\em Funct. Approx. Comment. Math.} 52: 65--74, 2015.

\bibitem{dig2} C.~Dartyge, A.~Sárközy. 
\newblock The sum of digits function in finite fields. 
\newblock {\em Proc. Amer. Math. Soc.} 141: 4119--4124,  2013.

\bibitem{dig3} M.~R.~Gabdullin.
\newblock On the squares in the set of elements of a finite field with constraints on the coefficients
of its basis expansion. 
\newblock {\em Mat. Zametki} 100: 807--824, 2016.

\bibitem{LN} R.~Lidl and H.~Niederreiter.
\newblock {\em Finite Fields} (Encyclopedia of Mathematics and its Applications). 
\newblock Cambridge: Cambridge University Press, 1996.


\bibitem{dig4} C. Swaenepoel.
\newblock Prescribing digits in finite fields. 
\newblock {\em J. Number Theory} 189: 97--114, 2018.

\bibitem{dig5} C. Swaenepoel.
\newblock On the sum of digits of special sequences in finite fields. 
\newblock {\em Monatsh. Math.} 187: 705--728, 2018.






\end{thebibliography}
\end{document}